\documentclass[11pt]{article}
\usepackage[T1]{fontenc}
\usepackage{lmodern}
\usepackage{amsmath,amssymb,amsthm,mathtools}
\usepackage[margin=1in]{geometry}
\usepackage[protrusion=true,expansion=false]{microtype}
\usepackage{enumitem}
\usepackage{booktabs}
\usepackage{hyperref}
\usepackage{tikz}
\usepackage{subcaption}
\hypersetup{
	colorlinks=true,
	linkcolor=blue,
	filecolor=magenta,
	urlcolor=cyan,
	citecolor=red,
	pdftitle={Counterexample to the Bougard--Joret Conjecture},
	pdfsubject={Vertex connectivity, independence number, and Turan extremal graphs},
	pdfauthor={Joyentanuj Das}
}
\newtheorem{theorem}{Theorem}[section]
\newtheorem{proposition}[theorem]{Proposition}
\newtheorem{corollary}[theorem]{Corollary}
\theoremstyle{definition}

\theoremstyle{remark}

\numberwithin{equation}{section}

\newcommand{\e}{\mathrm e}
\newcommand{\ind}{\alpha}
\newcommand{\conn}{\kappa}
\newcommand{\join}{\mathbin{\vee}}

\title{Counterexample to the Bougard--Joret Conjecture}
\author{Joyentanuj Das\thanks{Department of Mathematics, College of Engineering and
		Technology, SRM Institute of Science and Technology, Kattankulathur, Chennai 603203,
		India. E-mail addresses: joyentanuj@gmail.com; joyentad@srmist.edu.in.} \quad and \quad Sayan Gupta\footnote{School of Mathematical Sciences, NISER Bhubaneswar (An OCC of Homi Bhabha National Institute, Mumbai, 400094), India. Email: sayan.gupta@niser.ac.in}}
\date{}

\begin{document}
\maketitle

\begin{abstract}
For admissible integers $n,\alpha,k$, let $f(n,\alpha,k)$ be the minimum number of edges in a $k$-connected graph of order $n$ and independence number $\alpha$. A conjecture of Bougard and Joret predicts that $f(n,\alpha,k)=\lceil nk/2\rceil$ when $n\leq k\alpha$, under the assumptions $n\geq2\alpha$, $n\geq\alpha+k$, $\alpha\geq2$, and $k\geq3$.  We disprove this prediction, determine $f(n,\alpha,k)$ throughout the boundary $n=\alpha+k$, and characterize every extremal graph on that boundary.  In
particular, for every $k\geq4$,
\[
 f(2k-1,k-1,k)=k^2-1,
\]
whereas the conjectured value is $k^2-\lfloor k/2\rfloor$.  The extremal graphs in this family are precisely $\overline K_{k-1}\join T$, where $T$ is an arbitrary tree of order $k$.  The smallest-order failure has parameters $(n,\alpha,k)=(7,3,4)$, and no admissible counterexample has smaller order.
\end{abstract}

\noindent\textbf{Keywords.} Vertex connectivity, independence number, extremal size, Tur\'an theorem

\medskip
\noindent\textbf{MSC 2020.} 05C35, 05C40.

\section{Introduction and literature}

All graphs in this paper are finite, simple, and undirected.  The problem of
minimizing the size of a graph subject to a prescribed independence number is
a complementary formulation of the classical extremal problem of Tur\'an
\cite{Turan1941}.  If connectivity is not required, the unique minimizer of
order $n$ and independence number $\alpha$ is the disjoint union of $\alpha$ cliques whose orders differ by at most one.  Ore asked for the corresponding minimum under connectedness \cite{Ore1962}.  The connected problem was solved independently by Christophe et al.\ \cite{ChristopheEtAl2008} and Gitler and Valencia \cite{GitlerValencia2010}; see also the short arguments of Bougard and Joret \cite{BougardJoret2008} and Yuan \cite{Yuan2019}.

For parameters for which the class is nonempty, let $f(n,\alpha,k)$ denote the
minimum size of a $k$-connected graph of order $n$ and independence number
$\alpha$.  A graph attaining this minimum is called
$(n,\alpha,k)$-extremal, or simply extremal when the parameters are clear.

Bougard and Joret determined the $2$-connected case and then considered higher
vertex connectivity \cite{BougardJoret2008}.  For integers satisfying
\begin{equation}\label{eq:admissible}
 n\geq2\alpha,\qquad n\geq\alpha+k,\qquad \alpha\geq2,
 \qquad k\geq3,
\end{equation}
they conjectured that
\begin{equation}\label{eq:conjecture}
 f(n,\alpha,k)=
 \begin{cases}
 \left\lceil \dfrac{nk}{2}\right\rceil,&n\leq k\alpha,\\[6pt]
 t(n,\alpha)+\left\lceil\dfrac{k\alpha}{2}\right\rceil,
 &n>k\alpha.
 \end{cases}
\end{equation}
This is Conjecture~1 in Section~6 of \cite[pp.~11--12]{BougardJoret2008}.
Here $t(n,\alpha)$ is the size of the balanced union of $\alpha$ cliques such that the total order is $n$. This notation must be distinguished from the convention in which a Tur\'an graph is complete multipartite. The original paper established \eqref{eq:conjecture} on the boundary $n=k\alpha$: for $\alpha=2$ this covers all $n\geq2k$, and for $\alpha\geq3$ it covers the range
\[
 n\geq
 \left\lceil\frac{(k-2)\alpha}{2}\right\rceil\alpha+2\alpha,
\] a bound obtained from a clique-partition theorem \cite[Section~6, p.~12]{BougardJoret2008}. Wang and Wu later recorded that
the conjecture for $k\geq3$ was ``still unsettled'' \cite[Section~1]{WangWu2022}.  Liu and Ning reproduced the statement as their
Conjecture~3.1 and described it as ``still wide open'' \cite[Section~3, p.~121]{LiuNing2026}.  We are not aware of an earlier
counterexample.

The first line of \eqref{eq:conjecture} starts from the universal inequality
$\delta(G)\geq k$, which gives $2\e(G)\geq nk$.  A degree sum, however, does
not detect every connectivity cost.  On the boundary $n=\alpha+k$, the
admissibility condition $n\geq2\alpha$ is equivalent to $k\geq\alpha$; moreover,
$n\leq k\alpha$ holds automatically because $\alpha\geq2$ and $k\geq3$.
Deleting a maximum independent set leaves exactly $k$ vertices.  When
$\alpha=k-1$, those vertices must induce a connected graph.  That residual
connectivity costs $k-1$ edges, whereas the degree-sum estimate for a
one-connected graph accounts for only $\lceil k/2\rceil$ edges.  This is the
source of the counterexamples.

Section~\ref{sec:boundary} determines the exact boundary value and all graphs
attaining it.  It shows that \eqref{eq:conjecture} is correct on
$n=\alpha+k$ except precisely when $k-\alpha=1$ and $k\geq4$.
Section~\ref{sec:consequences} proves that order seven is minimal and records
the standard clique-partition estimate relevant to the second line of
\eqref{eq:conjecture}.  The latter is included to distinguish the unresolved
second regime from the boundary obstruction found here.

\section{Notation and preliminary facts}\label{sec:notation}

For a graph $G$, write $V(G)$ and $E(G)$ for its vertex and edge sets,
$\e(G)=|E(G)|$ for its size, $\delta(G)$ for its minimum degree,
$\ind(G)$ for its independence number, and $\conn(G)$ for its vertex
connectivity.  A graph is $k$-connected if it has more than $k$ vertices and
deleting any set of fewer than $k$ vertices leaves a connected graph.  The
join $G\join H$ is obtained from disjoint copies of $G$ and $H$ by adding every
edge with one endpoint in each copy.  We write $\overline K_s$ for the
edgeless graph of order $s$ and $P_s$ for the path of order $s$.

If $n=q\alpha+r$, where $0\leq r<\alpha$, define
\begin{equation}\label{eq:turan-size}
 t(n,\alpha)
 =r\binom{q+1}{2}+(\alpha-r)\binom q2.
\end{equation}
Thus $t(n,\alpha)$ is the number of edges in
$rK_{q+1}\cup(\alpha-r)K_q$.  Convexity of $x\mapsto\binom{x}{2}$ implies
that among all partitions $s_1+\cdots+s_\alpha=n$ into nonnegative integers,
the quantity $\sum_i\binom{s_i}{2}$ is minimized when the $s_i$ differ by at
most one; its minimum is \eqref{eq:turan-size}.

For integers $p\geq2$ and $0\leq d\leq p-1$, let $m(p,d)$ be the minimum size
of a graph of order $p$ that is $d$-connected, with the convention that no
condition is imposed when $d=0$.

For $d\geq2$, the next exact minimum follows from Harary's determination of
the maximum vertex connectivity at fixed order and size
\cite{Harary1962}; the cases $d=0,1$ are elementary.  We record the formula
for later use, especially its exceptional line for $d=1$.

\begin{proposition}\label{prop:min-connectivity-size}
For $p\geq2$,
\begin{equation}\label{eq:m-p-d}
 m(p,d)=
 \begin{cases}
 0,&d=0,\\
 p-1,&d=1,\\
 \left\lceil\dfrac{pd}{2}\right\rceil,&2\leq d\leq p-1.
 \end{cases}
\end{equation}
\end{proposition}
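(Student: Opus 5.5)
We treat the three lines of \eqref{eq:m-p-d} separately, proving a lower bound and giving a matching construction in each case.

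\emph{The cases $d=0$ and $d=1$.} For $d=0$ no condition is imposed, so the edgeless graph $\overline K_p$ is admissible and $m(p,0)=0$. For $d=1$, a graph of order $p\geq2$ is $1$-connected exactly when it is connected. Every connected graph of order $p$ contains a spanning tree, so it has at least $p-1$ edges. The path $P_p$ is connected, has $p\geq2>1$ vertices, and has exactly $p-1$ edges. Hence $m(p,1)=p-1$. This is the line where the degree bound $\lceil p/2\rceil$ is not sharp, and we record it for that reason.

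\emph{Lower bound for $2\leq d\leq p-1$.} Let $G$ be $d$-connected of order $p$, and let $v$ be any vertex. Suppose $\deg(v)<d$. Since $p>d$, the set $N(v)$ has fewer than $d$ vertices and misses at least one vertex other than $v$. Deleting $N(v)$ then isolates $v$ from the remaining vertices, which contradicts $d$-connectivity. Hence $\delta(G)\geq d$. The degree-sum identity gives $2\e(G)\geq pd$, so $\e(G)\geq\lceil pd/2\rceil$.

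\emph{Upper bound for $2\leq d\leq p-1$.} Here we invoke Harary's construction \cite{Harary1962}. For each $p>d\geq2$ it gives a graph $H_{d,p}$ on $p$ vertices with exactly $\lceil pd/2\rceil$ edges and connectivity $d$. Its shape depends on the parities of $d$ and $p$:
\begin{itemize}[nosep]
\item \emph{$d$ even:} $H_{d,p}$ is the circulant power $C_p^{d/2}$.
\item \emph{$d$ odd, $p$ even:} $H_{d,p}$ is $C_p^{(d-1)/2}$ with the diameters added.
\item \emph{$d$ odd, $p$ odd:} $H_{d,p}$ is $C_p^{(d-1)/2}$ with one vertex joined to two nearly antipodal vertices and the remaining vertices joined by near-diameters.
\end{itemize}
The edge count in each case is a routine check, and $H_{d,p}$ has $p>d$ vertices as required. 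The main obstacle is verifying that $\kappa(H_{d,p})\geq d$, namely that deleting fewer than $d$ vertices cannot disconnect a circulant power. Since the paper explicitly allows citing Harary for $d\geq2$, I would not reprove this and would take it from \cite{Harary1962}. If a self-contained argument were wanted, I would use the standard approach for the even case. Any separating set of $C_p^{d/2}$ must contain $d/2$ consecutive vertices on each of the two arcs joining two separated vertices, so it has at least $d$ vertices. For the odd cases I would adapt this counting using the diameter edges.

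Combining the matching bounds in the three cases yields \eqref{eq:m-p-d}.
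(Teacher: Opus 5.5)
Your proposal is correct and follows the paper's approach. The paper's justification is the remark that $d=0,1$ are elementary and that $d\geq2$ follows from Harary's result. You carry out the elementary cases (the edgeless graph, and a spanning tree versus $P_p$) and unpack Harary's theorem into the $\delta\geq d$ handshake lower bound and the Harary graph $H_{d,p}$ for the upper bound. Deferring $\kappa(H_{d,p})\geq d$ to Harary puts you at the same level of rigor as the paper.
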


The exceptional second line of \eqref{eq:m-p-d} is essential below.  For
$d=1$, the degree-sum quantity $\lceil p/2\rceil$ equals $p-1$ only when
$p\in\{2,3\}$ and is strictly smaller when $p\geq4$.

\section{The exact value at the order boundary}\label{sec:boundary}

On $n=\alpha+k$, the assumptions in \eqref{eq:admissible} are equivalent to
$\alpha\geq2$, $k\geq3$, and $k\geq\alpha$.  Every such triple belongs to the
first regime of \eqref{eq:conjecture}.

\begin{theorem}\label{thm:boundary}
Let $\alpha\geq2$, $k\geq3$, and $k\geq\alpha$.  Then
\begin{equation}\label{eq:boundary-general}
 f(\alpha+k,\alpha,k)=\alpha k+m(k,k-\alpha).
\end{equation}
Equivalently,
\begin{equation}\label{eq:boundary-piecewise}
 f(\alpha+k,\alpha,k)=
 \begin{cases}
 k^2,&\alpha=k,\\
 k^2-1,&\alpha=k-1,\\
 \left\lceil\dfrac{k(\alpha+k)}2\right\rceil,
 &2\leq\alpha\leq k-2.
 \end{cases}
\end{equation}
\end{theorem}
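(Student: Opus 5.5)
Our plan is to prove the general formula \eqref{eq:boundary-general} by matching a lower bound and a construction. Both come from the same decomposition: a maximum independent set $I$ of size $\alpha$ together with the residual set $R=V(G)\setminus I$, which has exactly $k$ vertices. The piecewise form \eqref{eq:boundary-piecewise} then follows by substituting Proposition~\ref{prop:min-connectivity-size} with $p=k$ and $d=k-\alpha$. The three cases are: $d=0$ gives $k^2$; $d=1$ gives $\alpha k+k-1=k^2-1$; and $2\le d\le k-1$ (that is, $\alpha\le k-2$, with $d\le k-2$ since $\alpha\ge2$) gives $\alpha k+\lceil k(k-\alpha)/2\rceil=\lceil k(\alpha+k)/2\rceil$.

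\textbf{Lower bound.} Let $G$ be $k$-connected of order $\alpha+k$ with $\ind(G)=\alpha$, and fix a maximum independent set $I$. Each $v\in I$ has all its neighbours in $R$, and $\deg v\ge\delta(G)\ge k=|R|$. Hence every vertex of $I$ is adjacent to every vertex of $R$, which accounts for exactly $\alpha k$ edges between $I$ and $R$. It remains to show that $G[R]$ is $(k-\alpha)$-connected, so that $\e(G[R])\ge m(k,k-\alpha)$.

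When $k-\alpha=0$ there is nothing to prove. Otherwise, take $S\subseteq R$ with $|S|<k-\alpha$. The set $S\cup I$ has fewer than $k$ vertices, so $G-(S\cup I)=G[R]-S$ is connected. We also need $|R|=k>k-\alpha$, which holds, so the definition of $(k-\alpha)$-connectedness is met. Summing the two contributions gives $\e(G)\ge\alpha k+m(k,k-\alpha)$.

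\textbf{Construction.} Let $H$ be a $(k-\alpha)$-connected graph of order $k$ with exactly $m(k,k-\alpha)$ edges, and set $G=\overline K_\alpha\join H$. Its size is $\alpha k+m(k,k-\alpha)$, so three properties must be checked.
\begin{enumerate}[label=(\roman*)]
\item \emph{Independence number is $\alpha$.} It is at least $\alpha$ because of the set $\overline K_\alpha$. An independent set meeting $H$ cannot contain any vertex of $\overline K_\alpha$, so it has size at most $\ind(H)$, and we need $\ind(H)\le\alpha$.
\item \emph{$k$-connectivity.} The order is $\alpha+k>k$. Remove a set $X$ with $|X|<k$. If $X$ misses some vertex of $\overline K_\alpha$ and some vertex of $H$, then $G-X$ is connected through the join. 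If $X\supseteq V(\overline K_\alpha)$, then $X$ contains fewer than $k-\alpha$ vertices of $H$, so what remains of $H$ is connected. The set $X$ cannot contain all of $H$, since $|X|<k=|H|$.
\item \emph{$\ind(H)\le\alpha$.} This is the main obstacle in the construction. A $d$-connected graph has $\delta(H)\ge d$. For $d\ge1$ we would instead use the fact that, in a $d$-connected graph of order $p$, deleting any $d-1$ vertices leaves a connected graph, so an independent set of size $s$ leaves $p-s$ non-neighbours structure to exploit. The cleanest route is the neighbourhood bound: an independent set $J$ with $|J|\ge2$ has its common complement $V(H)\setminus J$ separating two of its vertices, so $|V(H)\setminus J|\ge d$, giving $|J|\le p-d=\alpha$. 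When $|J|=1$ the bound $1\le\alpha$ is trivial, and when $d=0$ the bound $\ind(H)\le k=\alpha$ is trivial.
\end{enumerate}

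Hence the construction attains the lower bound, and \eqref{eq:boundary-general} holds.
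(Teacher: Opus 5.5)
Your proof is correct and follows the paper's route: the same maximum-independent-set decomposition forcing all $\alpha k$ cross edges and $(k-\alpha)$-connectivity of the residual graph, and the same join $\overline K_\alpha\join H$ for the matching construction. The only local difference is in the check $\ind(H)\le\alpha$. You argue uniformly that the complement of an independent set of size at least two is a separating set, so it has at least $d$ vertices. The paper instead takes $H=P_k$ when $d=1$ and uses a minimum-degree argument when $d\ge2$. Your version is valid, though the muddled sentence preceding ``The cleanest route'' should simply be deleted.
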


\begin{proof}
Set $n=\alpha+k$ and put
\[
 d=k-\alpha.
\]
The assumption $k\geq\alpha$ gives $d\geq0$, while $\alpha\geq2$ gives
$d<k$.  We prove a lower bound valid for every admissible graph and then
construct a graph that attains it.

Let $G$ be a $k$-connected graph of order $n$ with
$\ind(G)=\alpha$.  Choose a maximum independent set $S\subseteq V(G)$ and
write
\[
 T=V(G)\setminus S.
\]
Then $|S|=\alpha$ and
\[
 |T|=n-|S|=(\alpha+k)-\alpha=k.
\]
Because $G$ is $k$-connected, every vertex has degree at least $k$.
Now fix $s\in S$.  Since $S$ is independent, $s$ has no neighbor in $S$.
Thus all neighbors of $s$ must lie in the $k$-element set $T$.  The two
inequalities
\[
 k\leq\deg_G(s)\leq |T|=k
\]
force $\deg_G(s)=k$ and $N_G(s)=T$.  Since this holds for every $s\in S$,
all possible edges between $S$ and $T$ occur.  There are exactly
$|S||T|=\alpha k$ such edges, while there are no edges inside $S$.  If
\[
 H=G[T],
\]
then the edge set of $G$ is consequently the disjoint union of the
$\alpha k$ edges between $S$ and $T$ and the edges of $H$.  Therefore
\begin{equation}\label{eq:boundary-decomposition}
 \e(G)=\alpha k+\e(H).
\end{equation}

We next determine the connectivity that $G$ forces on $H$.  If $d=0$, our
definition of $m(k,0)$ imposes no condition, so nothing is required.  Suppose
that $d\geq1$, and let $X\subseteq T$ be arbitrary with $|X|<d$.  The sets
$S$ and $X$ are disjoint, and hence
\[
 |S\cup X|=\alpha+|X|<\alpha+d
                  =\alpha+(k-\alpha)=k.
\]
Deleting $S\cup X$ from $G$ therefore leaves a connected graph.  But the
vertices that remain are precisely $T\setminus X$, and all remaining edges
are the edges of $H-X$; in other words,
\[
 G-(S\cup X)=H-X.
\]
Thus $H-X$ is connected for every $X$ with $|X|<d$.  Moreover,
$|V(H)|=k>d$, so the order requirement in the definition of
$d$-connectivity is satisfied.  It follows that $H$ is $d$-connected.
By the definition of $m(k,d)$ and \eqref{eq:boundary-decomposition}, every
admissible $G$ satisfies
\begin{equation}\label{eq:boundary-lower}
 \e(G)=\alpha k+\e(H)\geq\alpha k+m(k,d).
\end{equation}

We now show that equality is attainable.  Choose a graph $H_d$ of order $k$
with
\[
 \e(H_d)=m(k,d)
\]
that is $d$-connected when $d\geq1$.  Such a graph exists by
Proposition~\ref{prop:min-connectivity-size}; more explicitly, we may take
$H_0=\overline K_k$ when $d=0$, $H_1=P_k$ when $d=1$, and any graph whose
existence is guaranteed by Proposition~\ref{prop:min-connectivity-size} when
$d\geq2$.  On a disjoint set $A$ of $\alpha$ vertices, form
\begin{equation}\label{eq:boundary-construction}
 G_0=\overline K_\alpha\join H_d.
\end{equation}
Thus $A$ is independent, every vertex of $A$ is adjacent to every vertex of
$H_d$, and the only other edges are those of $H_d$.  It follows immediately
that
\begin{equation}\label{eq:boundary-upper-size}
 \e(G_0)=\alpha k+\e(H_d)=\alpha k+m(k,d).
\end{equation}
We must verify that $G_0$ has exactly the prescribed independence number and
is $k$-connected.

No independent set of a join can contain a vertex from each side, because
every vertex on one side is adjacent to every vertex on the other.  Hence
\begin{equation}\label{eq:join-independence}
 \ind(G_0)=\max\{\alpha,\ind(H_d)\}.
\end{equation}
We check $\ind(H_d)\leq\alpha$ in each of the three possible cases.

If $d=0$, then $k=\alpha$ and $H_d=\overline K_k$, so
$\ind(H_d)=k=\alpha$.  In this case $G_0=K_{k,k}$.

If $d=1$, then $k-\alpha=1$, so $\alpha=k-1$.  We chose $H_d=P_k$.  This
path has an edge and therefore its entire $k$-vertex set is not independent;
consequently $\ind(H_d)\leq k-1=\alpha$.

Finally, suppose $d\geq2$.  Since $H_d$ is $d$-connected,
$\delta(H_d)\geq d$.  If $H_d$ contained an independent set $I$ with
$|I|\geq\alpha+1$, choose any $u\in I$.  The vertex $u$ has no neighbor in
$I$, so every neighbor of $u$ belongs to $V(H_d)\setminus I$.  There are at
most
\[
 k-(\alpha+1)=k-\alpha-1=d-1
\]
vertices in that set, giving $\deg_{H_d}(u)\leq d-1$, a contradiction.
Thus $\ind(H_d)\leq\alpha$.  Equation \eqref{eq:join-independence} now gives
\[
 \ind(G_0)=\alpha
\]
in all cases.

It remains to prove $k$-connectivity.  The graph $G_0$ has
$\alpha+k>k$ vertices.  Let $Y\subseteq V(G_0)$ be any set with $|Y|<k$.
Because $H_d$ has exactly $k$ vertices, at least one vertex of $H_d$ survives
the deletion of $Y$.

Suppose first that at least one vertex of the independent side $A$ also
survives.  Then both sides of the join have a surviving vertex.  Every
surviving vertex of $A$ is adjacent to every surviving vertex of $H_d$.
It follows that all surviving vertices lie in a single component: two
vertices on different sides are adjacent, two vertices of $A$ have a common
surviving neighbor in $H_d$, and two vertices of $H_d$ have a common
surviving neighbor in $A$.  Hence $G_0-Y$ is connected, independently of
the internal structure of $H_d-Y$.

Suppose instead that no vertex of $A$ survives.  Then $A\subseteq Y$.  If
$d=0$, then $|A|=\alpha=k$, contradicting $|Y|<k$, so this case can occur
only when $d\geq1$.  Put $X=Y\cap V(H_d)$.  Because $A$ and $V(H_d)$ are
disjoint and $A\subseteq Y$,
\[
 |X|=|Y|-|A|<k-\alpha=d.
\]
All remaining vertices lie in $H_d$, and
\[
 G_0-Y=H_d-X.
\]
The latter graph is connected because $H_d$ is $d$-connected.  We have
therefore shown that deletion of every set of fewer than $k$ vertices leaves
$G_0$ connected.  Thus $G_0$ is $k$-connected.

The graph $G_0$ is admissible, and \eqref{eq:boundary-upper-size} matches the
universal lower bound \eqref{eq:boundary-lower}.  This proves
\eqref{eq:boundary-general}.

It remains only to rewrite the answer.  If $\alpha=k$, then $d=0$, so
\[
 \alpha k+m(k,0)=k^2.
\]
If $\alpha=k-1$, then $d=1$, and Proposition~\ref{prop:min-connectivity-size} gives
\[
 \alpha k+m(k,1)=k(k-1)+(k-1)=k^2-1.
\]
If $2\leq\alpha\leq k-2$, then $d=k-\alpha\geq2$, and
\begin{align*}
 \alpha k+m(k,d)
 &=\alpha k+\left\lceil\frac{k(k-\alpha)}2\right\rceil\\
 &=\left\lceil
       \alpha k+\frac{k(k-\alpha)}2
   \right\rceil\\
 &=\left\lceil\frac{k(\alpha+k)}2\right\rceil.
\end{align*}
In the second equality we used the elementary identity
$z+\lceil x\rceil=\lceil z+x\rceil$ for an integer $z$.  These three cases
give \eqref{eq:boundary-piecewise}.
\end{proof}

\begin{corollary}\label{cor:boundary-characterization}
Let $\alpha\geq2$, $k\geq3$, $k\geq\alpha$, and put $d=k-\alpha$.  A
$k$-connected graph $G$ of order $\alpha+k$ and independence number $\alpha$
is extremal if and only if
\[
 G\cong\overline K_\alpha\join H,
\]
where $H$ has order $k$ and satisfies one of the following conditions:
\begin{enumerate}[label=\textup{(\roman*)}]
\item if $d=0$, then $H\cong\overline K_k$;
\item if $d=1$, then $H$ is a tree;
\item if $d\geq2$, then $H$ is $d$-connected and
      $\e(H)=\lceil kd/2\rceil$.
\end{enumerate}
\end{corollary}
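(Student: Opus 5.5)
The plan is to reuse the structure extracted in the proof of Theorem~\ref{thm:boundary}, proving the two directions separately.

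\emph{Forward direction.} Let $G$ be extremal, choose a maximum independent set $S$, and put $T=V(G)\setminus S$ and $H=G[T]$. The degree argument in the proof of Theorem~\ref{thm:boundary} applies to every admissible $G$, not only to extremal ones. It shows that each $s\in S$ is adjacent to all of $T$. Hence $G\cong\overline K_\alpha\join H$, where $|T|=k$ and $H$ is $d$-connected when $d\geq1$. By \eqref{eq:boundary-decomposition} and \eqref{eq:boundary-general}, extremality forces $\e(H)=m(k,d)$. The three cases then read off as follows.
\begin{enumerate}[label=\textup{(\roman*)}]
\item If $d=0$, then $m(k,0)=0$, so $H$ is edgeless, that is, $H\cong\overline K_k$.
\item If $d=1$, then $H$ is connected with $k-1$ edges, so $H$ is a tree.
\item If $d\geq2$, then $H$ is $d$-connected with $\e(H)=\lceil kd/2\rceil$ by Proposition~\ref{prop:min-connectivity-size}.
\end{enumerate}

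\emph{Converse direction.} Suppose $G\cong\overline K_\alpha\join H$ with $H$ as in (i)--(iii). The join has order $\alpha+k$ and size $\alpha k+\e(H)=\alpha k+m(k,d)$, which is the extremal value. It remains to check that $G$ is admissible, that is, $k$-connected with independence number exactly $\alpha$. The proof of Theorem~\ref{thm:boundary} already does this for a generic $d$-connected $H_d$ of minimum size, and those arguments transfer as follows.

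\begin{itemize}
\item \emph{Independence number, $d=0$.} Here $G=K_{k,k}$ and $\ind(G)=k=\alpha$.
\item \emph{Independence number, $d\geq2$.} The argument from the proof of the theorem works verbatim: $\delta(H)\geq d$ gives $\ind(H)\leq\alpha$.
\item \emph{Independence number, $d=1$.} A tree on $k\geq3$ vertices has an edge, so $\ind(H)\leq k-1=\alpha$. Then \eqref{eq:join-independence} gives $\ind(G)=\alpha$.
\item \emph{Connectivity.} The case analysis in the proof of Theorem~\ref{thm:boundary} used only that $H$ has order $k$ and is $d$-connected when $d\geq1$. A tree is $1$-connected, so that analysis yields $k$-connectivity unchanged.
\end{itemize}

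The main obstacle is the forward direction. One must make sure that the structural conclusion $G\cong\overline K_\alpha\join G[T]$ genuinely holds for every admissible $G$ rather than only for a constructed one. The argument is that $\deg(s)\geq k=|T|$ for each $s\in S$. The rest is bookkeeping of the equality case.
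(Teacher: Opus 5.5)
Your proposal is correct and follows the paper's proof essentially step for step. In the forward direction you obtain $G\cong\overline K_\alpha\join G[T]$ from $\deg(s)\geq k=|T|$ and read off $\e(H)=m(k,d)$ case by case, and in the converse you compute the size $\alpha k+m(k,d)$. You also spell out the independence-number and connectivity checks that the paper leaves to Theorem~\ref{thm:boundary}; these are strictly redundant, since the statement already assumes $G$ is $k$-connected with independence number $\alpha$, but they are harmless and show that every such join is admissible.
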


\begin{proof}
We first prove necessity.  Let $G$ be an extremal graph for
$f(\alpha+k,\alpha,k)$.  Choose a maximum independent set
$S\subseteq V(G)$, so $|S|=\alpha$, and put
\[
 T=V(G)\setminus S,
 \qquad H=G[T].
\]
Then $|T|=k$.  As shown in the lower-bound part of
Theorem~\ref{thm:boundary}, $\delta(G)\geq k$, while a vertex of $S$ can have
neighbors only among the $k$ vertices of $T$.  Hence every vertex of $S$ is
adjacent to every vertex of $T$.  There are no edges inside $S$, and
therefore
\begin{equation}\label{eq:cor-characterization-decomposition}
 G\cong\overline K_\alpha\join H,
 \qquad
 \e(G)=\alpha k+\e(H).
\end{equation}

The same proof also shows that $H$ is $d$-connected when $d\geq1$.  Indeed,
if $X\subseteq T$ and $|X|<d$, then
\[
 |S\cup X|=\alpha+|X|<\alpha+d=k,
\]
so $k$-connectivity of $G$ implies that
$G-(S\cup X)=H-X$ is connected.  When $d=0$, no condition on $H$ is imposed.

Since $G$ is extremal, Theorem~\ref{thm:boundary} and
\eqref{eq:cor-characterization-decomposition} give
\[
 \alpha k+\e(H)=\e(G)
   =f(\alpha+k,\alpha,k)
   =\alpha k+m(k,d).
\]
Cancelling $\alpha k$ yields
\begin{equation}\label{eq:cor-characterization-h-size}
 \e(H)=m(k,d).
\end{equation}
We now read this equality in the three possible cases.

If $d=0$, Proposition~\ref{prop:min-connectivity-size} gives
$m(k,0)=0$.  Thus $H$ has no edges and, since it has order $k$,
$H\cong\overline K_k$.  If $d=1$, then $H$ is connected and
\eqref{eq:cor-characterization-h-size} gives $\e(H)=m(k,1)=k-1$.  A
connected graph of order $k$ contains a spanning tree with $k-1$ edges; as
$H$ has no additional edges, it is itself a tree.  Finally, if $d\geq2$,
then $H$ is $d$-connected and Proposition~\ref{prop:min-connectivity-size}
turns \eqref{eq:cor-characterization-h-size} into
\[
 \e(H)=\left\lceil\frac{kd}{2}\right\rceil.
\]
This proves that every extremal graph has one of the stated forms.

We next prove sufficiency.  Let $H$ be a graph of order $k$ satisfying the condition corresponding to $d$, let $A$ be an independent set of order
$\alpha$ disjoint from $H$, and set
\[
 G=\overline K_\alpha\join H.
\]
Since the number of edges between $\overline K_\alpha$ and $H$ is $\alpha k$, the total number of edges $e(G)$ is clearly $\alpha k + m(k,d)$. It remains to show that $\alpha(G)=\alpha$ and $\kappa(G)= k$. However, the proof follows exactly the same way as Theorem~\ref{thm:boundary} and hence we skip it here. Therefore, every graph of one of the stated forms is extremal, completing the
proof of both directions.
\end{proof}

\begin{corollary}\label{cor:counterfamily}
For every integer $k\geq4$,
\begin{equation}\label{eq:counterfamily}
 f(2k-1,k-1,k)=k^2-1.
\end{equation}
The extremal graphs are precisely
$\overline K_{k-1}\join T$, where $T$ is a tree of order $k$.  In particular,
\eqref{eq:conjecture} is false.
\end{corollary}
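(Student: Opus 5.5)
The corollary is the case $\alpha=k-1$ of Theorem~\ref{thm:boundary} and Corollary~\ref{cor:boundary-characterization}, followed by a comparison with \eqref{eq:conjecture}. I would carry out three steps.

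\textbf{Step 1: parameters.} Set $\alpha=k-1$, so $n=\alpha+k=2k-1$ and $d=k-\alpha=1$. The hypotheses of Theorem~\ref{thm:boundary} require $\alpha\geq2$, $k\geq3$ and $k\geq\alpha$. These hold for $k\geq4$; in fact $k\geq3$ would suffice for this step.

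\textbf{Step 2: value and extremal graphs.} The middle line of \eqref{eq:boundary-piecewise} gives $f(2k-1,k-1,k)=k^2-1$. Case (ii) of Corollary~\ref{cor:boundary-characterization} identifies the extremal graphs as exactly $\overline K_{k-1}\join T$ with $T$ a tree of order $k$.

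\textbf{Step 3: falsifying the conjecture.} First I check that $(2k-1,k-1,k)$ is admissible in the sense of \eqref{eq:admissible}. We have $n=2k-1\geq 2k-2=2\alpha$, $n=\alpha+k$, $\alpha=k-1\geq2$, and $k\geq3$. It also lies in the first regime, since $2k-1\leq k(k-1)$ for $k\geq3$. So the conjecture predicts $\lceil k(2k-1)/2\rceil=\lceil k^2-k/2\rceil=k^2-\lfloor k/2\rfloor$. This is strictly less than $k^2-1$ exactly when $\lfloor k/2\rfloor\geq2$, that is, when $k\geq4$. Hence \eqref{eq:conjecture} fails for every $k\geq4$. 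For $k=3$ the two values agree, which is why that case is excluded.

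The only point needing care is the ceiling identity $\lceil k^2-k/2\rceil=k^2-\lfloor k/2\rfloor$. It follows from $\lceil -x\rceil=-\lfloor x\rfloor$ together with the fact that $k^2$ is an integer.
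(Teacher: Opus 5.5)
Your proposal is correct and follows the paper's proof essentially step for step. Like the paper, you specialize Theorem~\ref{thm:boundary} and case (ii) of Corollary~\ref{cor:boundary-characterization} to $\alpha=k-1$ (so $d=1$), check admissibility and membership in the first regime, and then compare $k^2-1$ with $\lceil k(2k-1)/2\rceil=k^2-\lfloor k/2\rfloor$.
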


\begin{proof}
Fix an integer $k\geq4$ and put
\[
 \alpha=k-1,\qquad n=2k-1.
\]
We first verify that all hypotheses of the extremal problem are satisfied.
Certainly $\alpha\geq3\geq2$ and $k\geq3$.  Moreover,
\[
 n=2k-1=(k-1)+k=\alpha+k
\]
and
\[
 n=2k-1\geq2k-2=2\alpha.
\]
Thus the triple $(n,\alpha,k)$ satisfies \eqref{eq:admissible}.  Since
$n=\alpha+k$ and $k\geq\alpha$, Theorem~\ref{thm:boundary} applies.  Here
\[
 d=k-\alpha=k-(k-1)=1,
\]
so either \eqref{eq:boundary-general} and
Proposition~\ref{prop:min-connectivity-size}, or directly the middle line of
\eqref{eq:boundary-piecewise}, gives
\begin{align*}
 f(2k-1,k-1,k)
 &= (k-1)k+m(k,1)\\
 &= k(k-1)+(k-1)\\
 &= k^2-1.
\end{align*}
This proves \eqref{eq:counterfamily}.

We next determine all equality cases.  In the present specialization
$d=1$.  Corollary~\ref{cor:boundary-characterization} says that an extremal
graph is precisely a join
\[
 \overline K_{k-1}\join H,
\]
where $H$ is a tree of order $k$.  Conversely, every such tree gives an
extremal graph by the same corollary.  Thus the displayed family is neither
merely a collection of examples nor dependent on the choice of a particular
tree: it is the complete extremal family.

It remains to compare the exact value with the proposed formula.  The
parameters lie in the first regime of \eqref{eq:conjecture}, because
\begin{align*}
 k\alpha-n
 &=k(k-1)-(2k-1)\\
 &=k^2-3k+1\\
 &=(k-1)(k-2)-1>0
\end{align*}
for every $k\geq4$.  Hence \eqref{eq:conjecture} predicts
\begin{align*}
 \left\lceil\frac{nk}{2}\right\rceil
 &=\left\lceil\frac{k(2k-1)}2\right\rceil\\
 &=\left\lceil k^2-\frac{k}{2}\right\rceil\\
 &=k^2-\left\lfloor\frac{k}{2}\right\rfloor.
\end{align*}
The last equality uses
$\lceil N-x\rceil=N-\lfloor x\rfloor$ when $N$ is an integer.  Since
$k\geq4$ implies $\lfloor k/2\rfloor\geq2$, the conjectured value is at most
$k^2-2$, whereas the exact value is $k^2-1$.  More precisely, the difference
between the exact and conjectured values is
\[
 (k^2-1)-
 \left(k^2-\left\lfloor\frac{k}{2}\right\rfloor\right)
 =\left\lfloor\frac{k}{2}\right\rfloor-1>0.
\]
Therefore \eqref{eq:conjecture} fails for every $k\geq4$.
\end{proof}

\section{Consequences and remaining directions}\label{sec:consequences}

\begin{proposition}\label{prop:smallest}
Among triples satisfying \eqref{eq:admissible}, the minimum possible order of
a counterexample to \eqref{eq:conjecture} is seven.  More precisely,
$f(7,3,4)=15$, while \eqref{eq:conjecture} predicts $14$.
\end{proposition}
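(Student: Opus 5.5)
The plan has two parts: show that $(7,3,4)$ is a counterexample, and show that no admissible triple of order at most six is one.

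For the first part we apply Corollary~\ref{cor:counterfamily} with $k=4$. It gives $f(7,3,4)=16-1=15$. The conjectured value is $\lceil 7\cdot4/2\rceil=14$, and since $n=7\leq k\alpha=12$ the triple lies in the first regime of \eqref{eq:conjecture}.

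For minimality, list the triples with $n\leq6$ satisfying \eqref{eq:admissible}. We need $\alpha\geq2$, $k\geq3$, $n\geq\alpha+k\geq5$ and $n\geq2\alpha$. For $n=5$ the only triple is $(5,2,3)$. For $n=6$ the candidates are $(6,2,3)$, $(6,2,4)$ and $(6,3,3)$. Note that $k\leq n-2$ is forced by $\alpha\geq2$.

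Three of these lie on the boundary $n=\alpha+k$, so Theorem~\ref{thm:boundary} settles them.
\begin{itemize}
\item[] $(5,2,3)$ has $\alpha\leq k-1$ with $d=1$, but $k=3$. The formula gives $k^2-1=8=\lceil15/2\rceil$, so it agrees with the conjecture.
\item[] $(6,2,4)$ has $d=2$, giving $\lceil 4\cdot6/2\rceil=12$, as the conjecture predicts.
\item[] $(6,3,3)$ has $d=0$, giving $k^2=9=\lceil18/2\rceil$.
\end{itemize}
In every boundary case the theorem's value coincides with \eqref{eq:conjecture}. The relevant observation is that for $d=1$ we have $m(k,1)=k-1=\lceil k/2\rceil$ exactly when $k\leq3$.

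The only non-boundary triple is $(6,2,3)$, where $n=6\leq k\alpha=6$, so the conjecture predicts $9$. The lower bound $9$ follows from $\delta\geq3$. For the upper bound we need a $3$-connected cubic graph on six vertices with independence number $2$. The triangular prism $K_3\square K_2$ works: it is $3$-connected, and any independent set contains at most one vertex from each triangle, so $\alpha\leq2$. Alternatively, this case is covered by Bougard and Joret's result on $n=k\alpha$ with $\alpha=2$, which holds for all $n\geq2k$. Either way the conjecture holds here.

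The main obstacle is only making the enumeration of admissible triples airtight and confirming the $(6,2,3)$ construction.
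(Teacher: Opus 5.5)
Your proposal is correct and follows the paper's proof: you use the boundary result for $(7,3,4)$, enumerate the same four admissible triples with $n\leq6$, and handle $(6,2,3)$ with the triangular prism. For the prism, add that $\ind\geq2$ because $a_1b_2$ is a non-edge. The only difference is that you dispatch $(5,2,3)$, $(6,2,4)$, $(6,3,3)$ through Theorem~\ref{thm:boundary} instead of checking explicit graphs by hand, but the joins that theorem produces ($\overline K_2\join P_3$, $\overline K_2\join C_4$, $\overline K_3\join\overline K_3=K_{3,3}$) are exactly the paper's $G_1$, $G_2$, $G_4$.
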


\begin{proof}
We first establish the failure at order seven.  For
$(n,\alpha,k)=(7,3,4)$ we have $n=\alpha+k$ and $d=k-\alpha=1$.
Theorem~\ref{thm:boundary} therefore gives
\[
 f(7,3,4)=3\cdot4+m(4,1)=12+3=15.
\]
This value is attained, for example, by
\[
 G_0=\overline K_3\join P_4.
\]
Indeed, $G_0$ has the twelve edges between the two join sides and the three
edges of $P_4$, for a total of fifteen.  An independent set in a join lies
in one side, so
\[
 \ind(G_0)=\max\{3,\ind(P_4)\}=3.
\]
To check connectivity directly, delete at most three vertices.  At least one
of the four path vertices survives.  If a vertex of the independent
three-vertex side also survives, the surviving portions of the two sides are
joined completely and hence form a connected graph.  If the entire
independent side is deleted, no path vertex was deleted, and the remaining
graph is $P_4$, which is connected.  Thus $G_0$ is $4$-connected.  On the
other hand, $7\leq4\cdot3$, so the first line of \eqref{eq:conjecture}
predicts only
\[
 \left\lceil\frac{7\cdot4}{2}\right\rceil=14.
\]
Consequently a counterexample of order seven exists.

There is also a useful direct explanation of why the predicted value
fourteen cannot occur.  Suppose, to the contrary, that a $4$-connected graph
$G$ of order seven with $\ind(G)=3$ has fourteen edges.  Since
$\delta(G)\geq4$,
\[
 28=2\e(G)=\sum_{v\in V(G)}\deg_G(v)\geq7\cdot4=28.
\]
Equality at the two ends forces every vertex to have degree four; hence $G$
is $4$-regular.  Each vertex consequently has degree
$6-4=2$ in the complement $\overline G$, so $\overline G$ is $2$-regular.

The equality $\ind(G)=3$ means that some three vertices are pairwise
nonadjacent in $G$, and these vertices induce a triangle in $\overline G$.
Each vertex of this triangle already has its two required neighbors within
the triangle.  Since $\overline G$ is $2$-regular, no edge of $\overline G$
can join the triangle to any of the other four vertices.  Thus the triangle
is an entire component of $\overline G$.  The remaining four vertices also
induce a simple $2$-regular graph, and the only such graph on four vertices
is a $4$-cycle.  Therefore
\[
 \overline G=C_3\cup C_4.
\]
Delete from $G$ the three vertices belonging to the $C_3$ component of
$\overline G$.  The graph left in $G$ is
\[
 \overline{C_4}=2K_2,
\]
which is disconnected.  This contradicts $4$-connectivity because only
three vertices were deleted.

We now prove that no smaller admissible order can yield a counterexample.
The inequalities in \eqref{eq:admissible} include
\[
 \alpha\geq2,\qquad k\geq3,\qquad
 n\geq2\alpha,\qquad n\geq\alpha+k.
\]
In particular, $n\geq2+3=5$, so there is no admissible triple with
$n\leq4$.  If $n=5$, the inequality $2\alpha\leq5$ and the condition
$\alpha\geq2$ force $\alpha=2$; then
$3\leq k\leq n-\alpha=3$, so $k=3$.  This gives only
\[
 (n,\alpha,k)=(5,2,3).
\]
If $n=6$, then $2\leq\alpha\leq3$.  For $\alpha=2$, the inequalities
$3\leq k\leq6-2$ give $k\in\{3,4\}$; for $\alpha=3$, they give
$3\leq k\leq6-3$, and hence $k=3$.  The complete list of admissible triples
with $n\leq6$ is therefore
\begin{equation}\label{eq:small-admissible-triples}
 (5,2,3),\qquad (6,2,3),\qquad
 (6,2,4),\qquad (6,3,3).
\end{equation}

Every triple in \eqref{eq:small-admissible-triples} satisfies
$n\leq k\alpha$; this also follows at once from
$n\leq6\leq k\alpha$.  Hence the conjectured value in every case is
$\lceil nk/2\rceil$.  Any $k$-connected graph has minimum degree at least
$k$, and the handshaking identity yields
\begin{equation}\label{eq:small-degree-lower}
 \e(G)\geq\left\lceil\frac{nk}{2}\right\rceil.
\end{equation}
It remains to exhibit, for each triple, a graph attaining this lower bound
with the required independence number and connectivity.

For $(5,2,3)$, let $G_1=K_5-E(M_2),$ where $M_2$ is a matching of two edges (for reference see Figure~\ref{fig:sub1}). It has
\[
 \e(G_1)=\binom52-2=8
        =\left\lceil\frac{5\cdot3}{2}\right\rceil.
\]

\begin{figure}[htbp]
    \centering
    \begin{subfigure}[b]{0.48\textwidth}
        \centering
        \begin{tikzpicture}[
            vertex/.style={circle, fill=black, inner sep=2.5pt},
            deleted/.style={dashed, black, thick}
        ]
            \useasboundingbox (-2.5,-3.2) rectangle (2.5,3.2);

            \node[vertex, label=above:$v_1$] (v1) at (90:2) {};
            \node[vertex, label=right:$v_2$] (v2) at (18:2) {};
            \node[vertex, label=right:$v_3$] (v3) at (-54:2) {};
            \node[vertex, label=left:$v_4$]  (v4) at (-126:2) {};
            \node[vertex, label=left:$v_5$]  (v5) at (162:2) {};

            \draw (v1) -- (v3); \draw (v1) -- (v4); \draw (v1) -- (v5);
            \draw (v2) -- (v3); \draw (v2) -- (v4); \draw (v2) -- (v5);
            \draw (v3) -- (v5); \draw (v4) -- (v5);

            \draw[deleted] (v1) -- (v2);
            \draw[deleted] (v3) -- (v4);

        \end{tikzpicture}
        \caption{$K_5 - E(M_2)$}
        \label{fig:sub1}
    \end{subfigure}
    \hfill
    \begin{subfigure}[b]{0.48\textwidth}
        \centering
        \begin{tikzpicture}[
            vertex/.style={circle, fill=black, inner sep=2.5pt},
            deleted/.style={dashed, black, thick}
        ]
            \useasboundingbox (-2.5,-3.2) rectangle (2.5,3.2);

            \node[vertex, label=above:$v_1$] (v1) at (90:2) {};
            \node[vertex, label=right:$v_2$] (v2) at (30:2) {};
            \node[vertex, label=right:$v_3$] (v3) at (-30:2) {};
            \node[vertex, label=below:$v_4$] (v4) at (-90:2) {};
            \node[vertex, label=left:$v_5$] (v5) at (-150:2) {};
            \node[vertex, label=left:$v_6$] (v6) at (150:2) {};

            \draw (v1) -- (v3); \draw (v1) -- (v4); \draw (v1) -- (v5); \draw (v1) -- (v6);
            \draw (v2) -- (v3); \draw (v2) -- (v4); \draw (v2) -- (v5); \draw (v2) -- (v6);
            \draw (v3) -- (v5); \draw (v3) -- (v6);
            \draw (v4) -- (v5); \draw (v4) -- (v6);

            \draw[deleted] (v1) -- (v2);
            \draw[deleted] (v3) -- (v4);
            \draw[deleted] (v5) -- (v6);

        \end{tikzpicture}
        \caption{$K_6 - E(M_3)$}
        \label{fig:sub2}
    \end{subfigure}
    
    \caption{Graphs $G_1$ and $G_2$.}
    \label{fig:main}
\end{figure}

The endpoints of either deleted edge form an independent pair.  No three vertices are independent, because the non-edges of $G_1$ form a matching and therefore cannot contain all three pairs among a three-element set.  Thus $\ind(G_1)=2$.  After deleting fewer than three vertices, at least three vertices remain.  The remaining graph is a complete graph with some pairwise disjoint edges removed.  If two surviving vertices are nonadjacent, they are the endpoints of one removed matching edge, and every third surviving vertex is adjacent to both.  Thus all surviving vertices lie in one component.  Hence $G_1$ is $3$-connected.

For $(6,2,4)$, let $G_2=K_6-E(M_3),$ where $M_3$ is a perfect matching (for reference see Figure~\ref{fig:sub2}). Then
\[
 \e(G_2)=\binom62-3=12=\frac{6\cdot4}{2}.
\]
Exactly the same matching argument gives $\ind(G_2)=2$.  Deleting fewer than four vertices leaves at least three vertices, and the same common-neighbor argument shows that the remaining graph is connected. Therefore $G_2$ is $4$-connected.

For $(6,2,3)$, we use the triangular prism $G_3$ (for reference see Figure~\ref{fig:G3}).  Denote its two vertex-disjoint triangles by
\[
 A=\{a_1,a_2,a_3\},\qquad B=\{b_1,b_2,b_3\},
\]
and let its remaining three edges be $a_ib_i$ for $1\leq i\leq3$.  The graph has three edges in each triangle and three matching edges, so it has nine edges.  An independent set contains at most one vertex of each triangle, while $a_1$ and $b_2$ are nonadjacent; hence its independence number is exactly two.  Delete at most two vertices.  Each triangle retains at least one vertex, and two deleted vertices cannot meet all three pairwise vertex-disjoint matching edges.  Thus some matching edge $a_ib_i$ survives.  Each nonempty remnant of a triangle is connected, and the surviving matching edge joins the two remnants.  The resulting graph is connected, so the triangular prism is $3$-connected.

\begin{figure}[ht]
\centering
\begin{tikzpicture}[
    vertex/.style={circle, fill=black, inner sep=2.5pt}]

\node[vertex, label=left:$a_1$]  (a1) at (0,0) {};
\node[vertex, label=right:$a_2$] (a2) at (3,0) {};
\node[vertex, label=above:$a_3$] (a3) at (1.5,2.6) {};

\node[vertex, label=left:$b_1$]  (b1) at (1.2,0.8) {};
\node[vertex, label=right:$b_2$] (b2) at (4,0.8) {};
\node[vertex, label=above:$b_3$] (b3) at (2.5,3.4) {};

\draw (a1) -- (a2);
\draw (a2) -- (a3);
\draw (a3) -- (a1);

\draw (b1) -- (b2);
\draw (b2) -- (b3);
\draw (b3) -- (b1);

\draw (a1) -- (b1);
\draw (a2) -- (b2);
\draw (a3) -- (b3);

\end{tikzpicture}
    \caption{$G_3$}
    \label{fig:G3}
\end{figure}

Finally, for $(6,3,3)$, consider the graph $G_4=K_{3,3}$. It has nine edges.  Each bipartition class is an independent set of order three, while an independent set cannot meet both classes because every possible cross-edge is present.  Thus $\ind(G_4)=3$.  Deleting fewer than three vertices cannot remove all three vertices of either bipartition class.  At least one vertex therefore survives in each class, and the remaining graph is a complete bipartite graph with both parts nonempty, which is connected.  Hence $G_4$ is $3$-connected.

All four graphs attain the corresponding lower bound \eqref{eq:small-degree-lower}.  Thus \eqref{eq:conjecture} holds for every admissible triple of order at most six.  Since it fails for $(7,3,4)$, seven is the minimum possible order of a counterexample.
\end{proof}

The following standard clique-partition estimate is the counting implication
used by Bougard and Joret in their verification of the large-order range
\cite[Section~6, p.~12]{BougardJoret2008}.  We record it for reference; it
does not imply that every extremal graph has the required partition.

\begin{proposition}\label{prop:partition}
Let $\alpha\geq2$, and let $G$ be a $k$-connected graph of order $n$.  If
$V(G)$ has a partition $C_1\cup\cdots\cup C_\alpha$ into $\alpha$ nonempty
cliques, then
\begin{equation}\label{eq:partition-bound}
 \e(G)\geq t(n,\alpha)+\left\lceil\frac{k\alpha}{2}\right\rceil.
\end{equation}
\end{proposition}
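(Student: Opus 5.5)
We split the edge set of $G$ into edges inside the cliques and edges between different cliques, and bound each part separately. Write $s_i=|C_i|$, so $s_1+\cdots+s_\alpha=n$ with each $s_i\geq1$. Since every $C_i$ is a clique, the number of edges inside the cliques is exactly
\[
 \sum_{i=1}^{\alpha}\binom{s_i}{2}\geq t(n,\alpha),
\]
by the convexity remark following \eqref{eq:turan-size}. It therefore suffices to show that the number $\e_{\mathrm{cross}}$ of edges joining distinct cliques satisfies $\e_{\mathrm{cross}}\geq\lceil k\alpha/2\rceil$.

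For the cross edges, we work one clique at a time. Fix $i$ and let $\partial C_i$ be the set of edges with exactly one endpoint in $C_i$. We claim $|\partial C_i|\geq k$. Because $\alpha\geq2$, the complement $V(G)\setminus C_i$ is nonempty. Let $N_i$ be the set of endpoints of $\partial C_i$ lying outside $C_i$. If $N_i\neq V(G)\setminus C_i$, then $N_i$ separates $C_i$ from the remaining vertices, so $k$-connectivity gives $|N_i|\geq k$. If $N_i=V(G)\setminus C_i$, then $|N_i|=n-s_i$. Since $G$ is $k$-connected, it has more than $k$ vertices, and any vertex $v\in C_i$ has degree at least $k$ with only $s_i-1$ neighbours inside $C_i$. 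This forces either $n-s_i\geq k$ or an extra count of edges per vertex of $C_i$.

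The cleaner route is to count edges rather than outside neighbours. Each $v\in C_i$ has at least $k-(s_i-1)$ neighbours outside $C_i$. So if $s_i\leq k$, then
\[
 |\partial C_i|\geq s_i\bigl(k-s_i+1\bigr)\geq k,
\]
because $s(k-s+1)\geq k$ for $1\leq s\leq k$ by concavity, with equality at the endpoints. If $s_i>k$, then $|\partial C_i|\geq|N_i|$, and we split as before. If $N_i$ is a proper subset of the outside, it is a separator and so $|N_i|\geq k$. Otherwise every vertex outside $C_i$ has a neighbour in $C_i$, giving $|\partial C_i|\geq n-s_i$. This case is the one I expect to be the obstacle, since $n-s_i$ could be smaller than $k$. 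I would handle it via Menger: pick an outside vertex $u$ and a vertex $w\in C_i$ not adjacent to $u$. If no such $w$ exists, then $u$ is adjacent to all of $C_i$, and $s_i>k$ already gives at least $k$ edges. Otherwise there are $k$ internally disjoint $u$--$w$ paths, and each path leaves $C_i$ through its own edge of $\partial C_i$. These edges are distinct because the paths share no internal vertices, and the last edge entering $C_i$ on each path has a distinct outside endpoint or is distinct at the $u$ end. In fact the Menger argument handles all cases uniformly, so I would use it throughout. If some vertex $u\notin C_i$ is nonadjacent to some $w\in C_i$, the $k$ disjoint paths give $k$ distinct boundary edges. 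Otherwise $\partial C_i$ is complete between $C_i$ and its complement, so $|\partial C_i|=s_i(n-s_i)\geq n-1\geq k$.

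Finally, every cross edge lies in exactly two of the sets $\partial C_i$. Hence
\[
 2\e_{\mathrm{cross}}=\sum_i|\partial C_i|\geq k\alpha,
\]
and integrality gives $\e_{\mathrm{cross}}\geq\lceil k\alpha/2\rceil$. Adding this to the bound on the edges inside the cliques yields \eqref{eq:partition-bound}.
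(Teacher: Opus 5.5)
Your argument is correct. The paper gives no proof of this proposition; it only calls it the standard counting estimate used by Bougard and Joret. Your final argument is exactly that count:
\begin{itemize}
\item the interior edges give $\sum_i\binom{s_i}{2}\geq t(n,\alpha)$ by convexity;
\item each clique satisfies $|\partial C_i|\geq k$;
\item every cross edge lies in exactly two of the sets $\partial C_i$, so double counting gives at least $\lceil k\alpha/2\rceil$ cross edges.
\end{itemize}

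The only step needing thought is $|\partial C_i|\geq k$. This is a special case of Whitney's inequality $\kappa(G)\leq\lambda(G)$, applied to the edge cut $\partial C_i$; both sides are nonempty because $\alpha\geq2$ and all cliques are nonempty. You could simply cite it, and your Menger argument is a valid self-contained proof of that case.

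In a write-up, drop the exploratory material: the $N_i$ separator case and the degree count for $s_i\leq k$ are redundant once you use Menger uniformly. Also state the distinctness step cleanly. Since $uw\notin E(G)$, internally disjoint $u$--$w$ paths are edge-disjoint, because a shared edge would need both endpoints in $\{u,w\}$. Hence choosing one edge of $\partial C_i$ on each path gives $k$ distinct boundary edges.

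Your complementary case is also right: $|\partial C_i|=s_i(n-s_i)\geq n-1\geq k$, using $1\leq s_i\leq n-1$ and $n>k$.
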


Bougard and Joret proved a clique-partition conclusion under an explicit
large-order hypothesis and thereby verified the second line of
\eqref{eq:conjecture} in that range \cite{BougardJoret2008}.  The general
second regime remains a separate question: a proof must either establish the
partition or obtain \eqref{eq:partition-bound} without it.  The counterfamily
above shows that the first regime requires revision before a unified statement
can be true.  The boundary $n=\alpha+k$ is completely determined by
\eqref{eq:boundary-piecewise}, and Corollary~\ref{cor:boundary-characterization}
determines every equality case there.  Any corrected conjecture must therefore
include the exceptional value $k^2-1$ and the extremal family
$\overline K_{k-1}\join T$ when $\alpha=k-1$.

\section*{Declaration of generative AI and AI-assisted technologies in the manuscript preparation process}
During the preparation of this work, the authors used AI to discuss proof strategies, organize and check bibliographic information, check algebraic calculations and proof exposition, and improve language and readability.  After using AI, the authors reviewed and edited the content as needed, independently verified the cited sources, calculations, statements, and proofs, and take full responsibility for the content of the article.

\section*{Declaration of competing interest}
The authors declare that they have no known competing financial interests or personal relationships that could have appeared to influence the work reported in this article.


\section*{Data availability}

Data sharing is not applicable to this article as no datasets were generated or analyzed during the current study.

\end{document}